\documentclass[12pt]{amsart}
\usepackage[inner=1in,outer=1in,top=1in,bottom=1in]{geometry}
\usepackage[utf8]{inputenc}
\usepackage[T1]{fontenc}
\usepackage{url,mathrsfs}
\usepackage{amsmath}
\usepackage{amssymb}
\usepackage{amsthm}
\usepackage[
  natbib=true,
  backend=bibtex,
  style=alphabetic,
  maxalphanames=99,
  maxbibnames=99,
  sorting=nyt,
]{biblatex}
\usepackage{graphicx, epstopdf}
\usepackage{subfig}
\usepackage{color}
\usepackage{bbm} 
\usepackage{dsfont}
\usepackage[shortlabels]{enumitem}
\usepackage{subfloat}
\usepackage[draft]{fixme}
\usepackage{bm}
\usepackage{bbm} 
\usepackage{ifpdf}
\usepackage{array}
\usepackage{mathtools}
\usepackage{multicol}
\usepackage{multirow}
\usepackage{graphicx}
\usepackage{tikz}
\usepackage{tikz-3dplot}
\usepackage{thmtools}
\usepackage{todonotes}
\usepackage{hyperref}
\usepackage[capitalize, nameinlink]{cleveref}
\hypersetup{
    colorlinks=true,
    citecolor=green,
    filecolor=black,
    linkcolor=blue,
    urlcolor=blue, % Change these colors as desired
    pdfauthor={Tracy Chin},
}

\newcommand\CC{{\mathbb C}}

\newcommand\RR{{\mathbb R}}

\newcommand\cal{\mathcal}

\newcommand\cB{{\cal B}}

\renewcommand{\phi}{\varphi}
\renewcommand{\epsilon}{\varepsilon}

\newcommand{\dl}{\partial}

\newcommand\conv{\operatorname {conv}}

\DeclareMathOperator{\sign}{sign}
\newcommand\supp{\operatorname {supp}}

\DeclareMathOperator{\val}{val}

\renewcommand{\Im}{\operatorname{Im}}

\newcommand\lbbrace{\{\!\{}
\newcommand\rbbrace{\}\!\}}

\theoremstyle{definition}
\newtheorem{definition}{Definition}[section]

\newtheorem{example}[definition]{Example}
\newtheorem*{example*}{Example}

\theoremstyle{plain}
\newtheorem{theorem}[definition]{Theorem}
\newtheorem*{theorem*}{Theorem}
\newtheorem{lemma}[definition]{Lemma}
\newtheorem{proposition}[definition]{Proposition}

\newtheorem*{corollary*}{Corollary}

\title{Oriented and Valuated Delta Matroids from Stable Polynomials}
\author{Tracy Chin}
\date{September 8, 2026}

\begin{document}
  \begin{abstract}
    Stable polynomials are the natural multivariate generalization of real rooted univariate polynomials. While their definition is purely algebraic in nature, they have deep connections to combinatorics. One such connection is the support theorem proved by \citeauthor{branden2007halfplane} in \citeyear{branden2007halfplane}, showing that the support of any stable polynomial is a jump system, and hence that the support of any multiaffine stable polynomial is a $\Delta$-matroid. In this work, we generalize this result, showing that coefficients of multiaffine real stable polynomials give rise to oriented $\Delta$-matroids, and coefficients of multiaffine stable polynomials over Puiseux series give rise to valuated $\Delta$-matroids.
  \end{abstract}
  \maketitle
  
  \section{Introduction}
  Stable polynomials are the natural multivariate generalization of real-rooted univariate polynomials. They have been central to many mathematical discoveries in seemingly unrelated areas, including constructing infinite families of Ramanujan graphs \cite{marcus2015interlacingi} and resolving the Kadison-Singer problem \cite{marcus2015interlacingii}.
  In a seminal work on stability, \citeauthor{branden2007halfplane} established a combinatorial support condition for stable polynomials. In this work, we expand on the multiaffine corollary of his main result.
  \begin{theorem*}[{\cite[Corollary 3.3]{branden2007halfplane}}]
    The support of a multiaffine stable polynomial is a $\Delta$-matroid.
  \end{theorem*}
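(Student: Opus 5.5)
The plan is to verify the symmetric exchange axiom directly, using closure operations on stable polynomials to reduce to a small number of variables. Let $f=\sum_{S\subseteq[n]} a_S z^S$ be multiaffine and stable, with support $\mathcal F=\{S : a_S\neq 0\}$. We must show the following: for $S,T\in\mathcal F$ and $i\in S\triangle T$, there is $j\in S\triangle T$ with $S\triangle\{i,j\}\in\mathcal F$ ($j=i$ allowed). We use the standard stability-preserving operations:
\begin{itemize}
\item specializing a variable to a real number, or to $0$;
\item taking $\partial_{z_k}$, which extracts the coefficient of $z_k$;
\item the inversion $z_k\mapsto -1/z_k$ followed by multiplication by $z_k$, which swaps "containing $k$" with "not containing $k$";
\item scaling variables by positive reals;
\item diagonalization $z_k=z_l$.
\end{itemize}
By inverting on the coordinates of $S$, we may replace $\mathcal F$ by $\mathcal F\triangle S$ and assume $S=\emptyset$. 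Then $a_\emptyset\neq 0$, and $T\ni i$ is in the support. We need $j$ with $\{i,j\}\in\mathcal F$, or $\{i\}\in\mathcal F$.

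Next we reduce the variables. Suppose $\{i\}\notin\mathcal F$ and $\{i,j\}\notin\mathcal F$ for all $j\in T$. Restrict to the variables in $T$ by setting $z_k=0$ for $k\notin T$; this is stable and keeps the coefficients $a_U$ for $U\subseteq T$. So we may assume $T=[n]$ and $a_{[n]}\neq 0$. Now take $g=\partial_{z_i} f|_{z_i=0}$ with all other variables kept, and specialize. Here $g$ has zero constant term and zero linear terms, while $a_{[n]\setminus i}\neq 0$. Meanwhile $h=f|_{z_i=0}$ has $h(0)=a_\emptyset\neq 0$. Stability of $f=h+z_i g$ implies that $h$ and $g$ are "in proper position": $h+z_i g$ stable for $z_i$ in the upper half plane means $g/h$-type interlacing, i.e. $h+tg$ is stable for all real $t$ (or the equivalent Wronskian condition in the real case; for complex coefficients first rotate by a phase, reducing to the real case via the standard argument in Borcea–Brändén).

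The key step is to derive a contradiction from this degree structure. Set all remaining variables equal to a single $x$ via diagonalization, and rescale by positive scalars so that no cancellation kills the extreme coefficients. (Choose generic positive weights $\lambda_k$: then $a_\emptyset$ and the top coefficient survive, since a single monomial sits at each extreme.) We get univariate real-rooted-type polynomials $h(x)$ with $h(0)\neq0$ and $g(x)$ vanishing to order $\ge2$ at $0$, such that $h+z g$ is stable in $(x,z)$. For bivariate stability in proper position, $g$ must interlace with $h$ up to degree. The double root of $g$ at $0$ must be interlaced by a root of $h$ between the two, forcing $h(0)=0$. This contradicts $h(0)=a_\emptyset\neq0$. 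Equivalently, one can apply the one-variable criterion: $g(x)$ real-rooted with a root of multiplicity $\ge2$ at $0$ is a common root with any interlacing partner.

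The main obstacle is making this last step rigorous for complex coefficients and ensuring that the diagonalization does not annihilate the needed coefficients. I would first handle real stable $f$ via the Rayleigh/Wronskian inequality $\partial_i f\cdot\partial_j f-f\,\partial_i\partial_j f\ge0$ on $\mathbb R^n$, evaluated near $0$. Then I would deduce the complex case by multiplying $f$ by a suitable phase and using that the real and imaginary parts of a stable polynomial are in proper position (Hermite–Biehler), applying the real case to one of them whose support still contains $\emptyset$ and $T$.
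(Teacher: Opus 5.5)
Your proof is correct, but it does not follow the paper. The paper gives no proof of this statement: it quotes it from Br\"and\'en, who deduces it from the stronger theorem that the support of an arbitrary (not necessarily multiaffine) stable polynomial is a jump system.

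You instead check the exchange axiom directly in the multiaffine case. You twist so that $S=\varnothing$, set the variables outside $T$ to zero, write $f=h+z_ig$, and diagonalize. This gives a stable $H(x)+zG(x)$ with $H(0)=a_\varnothing\neq 0$ and $G\not\equiv 0$ vanishing to order $m\geq 2$ at $0$, which interlacing forbids. All steps hold. Two small corrections:
\begin{itemize}
\item $G\not\equiv 0$ because its coefficient of $x^{|T|-1}$ is $a_T\prod_k\lambda_k$. You wrote $a_{[n]\setminus i}$ where you meant $a_{[n]}$. This also shows the generic weights are not needed.
\item The obstacle you flag for complex coefficients disappears. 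For $\Im x>0$, stability in $z$ forces $\Im\bigl(H(x)/G(x)\bigr)\geq 0$. Yet near $0$ one has $H/G\approx c\,x^{-m}$, whose argument sweeps an interval of length $m\pi\geq 2\pi$ on a small upper half-disc. This settles complex coefficients directly, with no Hermite--Biehler reduction.
\end{itemize}
The same observation makes the Rayleigh-difference route unnecessary. As sketched it would need an induction on lowest-degree terms anyway, since $\Delta_{ij}f(0)=0$ automatically here.

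Your route buys a short, self-contained argument using only one-variable facts; Br\"and\'en's buys the jump-system generalization. There is also an even shorter route, in the spirit of the paper's proof of \cref{thm:stable-implies-val-delta-matroid}. Since $e_T$ has positive $i$-th coordinate, some edge $[0,e_U]$ of the Newton polytope at the vertex $0$ has $i\in U$. Its initial form $a_\varnothing+a_Uz^U$ is a stable binomial, and the same argument-counting forces $|U|\leq 2$.
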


  More recently, generalizations of $\Delta$-matroids have become a greater focus of study, paralleling similar classes for ordinary matroids. In \cite{booth2001orientedlagrangian}, \citeauthor{booth2001orientedlagrangian} establish a theory of oriented $\Delta$-matroids. (Note that the Lagrangian matroids in \cite{booth2001orientedlagrangian} are synonymous with $\Delta$-matroids. Throughout this paper we use the $\Delta$-matroid terminology.) Recent papers have also proferred two definitions for valuated $\Delta$-matroids: a definition based on discrete geometry and representability in \cite{cheung2025valuateddeltamatroidsprincipal}, and one arising from the broader theory of Coxeter matroids in \cite{calvert2025quadraticexchangeequationscoxeter}.

  In this work, we establish signed and valuated versions of \citeauthor{branden2007halfplane}'s support theorem. Specifically, for the former, we show that sign patterns of real stable polynomials give rise to oriented $\Delta$-matroids.
  \begin{theorem*}[\cref{thm:real-stable-implies-orientation}]
    If $f = \sum_{S \subseteq [n]} a_S x^S$ is a real stable polynomial with $a_\varnothing > 0$, then $s_\varnothing : S \mapsto \sign(a_S)$ is an orientation of $\supp(f)$ with respect to $\varnothing$.
  \end{theorem*}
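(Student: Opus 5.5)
The plan is to reduce the sign condition for an orientation to a low-dimensional statement about coefficients, and to derive that statement from Brändén's characterization of multiaffine real stability via the Rayleigh-type differences
\[
\Delta_{ij}f \;=\; \frac{\partial f}{\partial x_i}\frac{\partial f}{\partial x_j} \;-\; f\,\frac{\partial^2 f}{\partial x_i\partial x_j} \;\ge\; 0 \quad\text{on } \RR^n .
\]
I am recalling this criterion (valid for $n \geq 2$) from Brändén's work; it is not stated earlier in the paper. I also do not restate the paper's axioms for an orientation with respect to $\varnothing$, so I treat them as a signed symmetric-exchange condition on pairs of feasible sets. Recall that the orientations of Booth et al.\ are defined by a sign map together with exchange relations among coefficients indexed by $A$, $B$, $A\triangle\{x,y\}$, $B\triangle\{x,y\}$. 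The strategy is therefore: (i) show that the class of real stable multiaffine polynomials with positive constant term is closed under the operations that realize minors, and that these operations transform the sign map $s_\varnothing$ in a controlled way; (ii) use these operations to reduce a given exchange instance to a polynomial in few variables; (iii) verify the sign relation there directly from the Rayleigh inequality evaluated at suitable points.

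For step (i) I would use the standard stability-preserving operations. Setting $x_i=0$ gives deletion and keeps the constant term. Taking $\partial/\partial x_i$ gives contraction; the new constant term is $a_{\{i\}}$, whose sign may be negative, so I multiply by $\pm1$ to keep it positive. Specializing $x_j$ to a real number, and the partial inversion $x_i \mapsto -1/x_i$ followed by multiplication by $x_i$, both preserve real stability. The inversion changes the reference feasible set from $\varnothing$ to $\{i\}$ and flips signs in a computable pattern ($a_S \mapsto \pm a_{S\triangle\{i\}}$, with the sign depending on whether $i\in S$). With these I would show that if $A,B$ are feasible and $x\in A\triangle B$, we may contract $A\cap B$, delete $[n]\setminus(A\cup B)$, and move the reference set to $A$ by inversions. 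This transforms the instance into one where the relevant sets are $\varnothing$ and a set $C=A\triangle B$, with all signs tracked explicitly. The twisting must then be checked to be compatible with how orientations transform under pivoting (twisting) in the Booth et al.\ framework. This is bookkeeping, but it must be done carefully.

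For step (iii), with reference set $\varnothing$ and $a_\varnothing>0$, the key inequality is the Rayleigh condition at the origin, $a_{\{i\}}a_{\{j\}} - a_\varnothing a_{\{i,j\}} \ge 0$. More generally, one evaluates $\Delta_{ij}f$ at points where the other variables are set to $0$ or sent to $\infty$ after rescaling, which extracts inequalities $a_{S+i}a_{S+j} - a_S a_{S+i+j}\ge 0$. These are exactly the three-term sign relations: if $a_S a_{S+i+j}>0$, then $a_{S+i}$ and $a_{S+j}$ are nonzero with the same sign. This gives both feasibility of the exchanged sets, recovering Brändén's support theorem, and the sign relation required by the orientation axiom.

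The main obstacle is step (ii) when $|A\triangle B|>2$. Here a single Rayleigh inequality does not immediately produce the required $y$, and we need a genuine multi-term exchange. I would first try to reduce to the local case via a "local-to-global" result for orientations of $\Delta$-matroids, analogous to the fact that chirotopes are determined by three-term Grassmann--Plücker relations. If that is unavailable, I would argue by induction on $|A\triangle B|$: specialize all but a few variables of $C$ to large positive or negative reals, chosen so that the dominant terms of $f$ isolate the coefficients $a_\varnothing$, $a_C$, and $a_{\{x,y\}}$ or $a_{C\setminus\{x,y\}}$. Then apply the two-variable Rayleigh inequality to this specialization, which is again real stable by step (i), and read off the sign relation in the limit.
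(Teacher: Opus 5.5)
There is a genuine gap, and it comes from aiming at the wrong definition. An orientation with respect to $\varnothing$ in the sense of \cref{def:delta-matroid-orientation} (Booth et al.) is \emph{not} a signed symmetric-exchange axiom on arbitrary pairs $A,B$. Apart from Axiom~1 ($\supp(f)$ is a $\Delta$-matroid) and Axiom~5 ($a_\varnothing>0$), Axioms~2--4 only constrain long edges of $P_{\cB}$ and two-dimensional faces of the cube. So each of them lives in a single square $\{S,\,S+i,\,S+j,\,S+i+j\}$ with $i,j\notin S$ (writing $S+i$ for $S\cup\{i\}$).

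The step you call the main obstacle, step~(ii) for $|A\Delta B|>2$, is therefore not part of what must be proved, and you leave it as an unexecuted plan anyway. In the symmetric form you describe (both $A\Delta\{x,y\}$ and $B\Delta\{x,y\}$ feasible, plus a sign relation) it is actually false. The polynomial $f=2-x_1-x_2-x_3+x_1x_2x_3$ from \cref{ex:no-strong-exchange} is real stable with $a_\varnothing=2>0$. Yet for $A=\varnothing$, $B=\{1,2,3\}$, $x=1$, no $y$ puts both exchanged sets in $\supp(f)=\{\varnothing,1,2,3,123\}$.

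Relatedly, the three-term relations do \emph{not} recover Br\"and\'en's support theorem. For $1+x_1x_2x_3$ every inequality $a_{S+i}a_{S+j}\ge a_Sa_{S+i+j}$ reads $0\ge 0$, yet $\{\varnothing,\{1,2,3\}\}$ is not a $\Delta$-matroid. Axiom~1 needs Br\"and\'en's theorem as a separate, genuinely global input, which is how the paper handles it.

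The repair is that your step~(iii), combined with the locality of the axioms, is already the whole proof, and it is essentially the paper's argument. Your extraction is valid. Writing $f=p+qx_i+rx_j+sx_ix_j$ gives $\Delta_{ij}f=qr-ps$. Setting the variables outside $S\cup\{i,j\}$ to $0$ and those in $S$ to $\lambda$, dividing by $\lambda^{2|S|}$ and letting $\lambda\to\infty$ yields $a_{S+i}a_{S+j}-a_Sa_{S+i+j}\ge 0$. The paper gets the same inequality $bc-ad\ge 0$ by passing to the initial form supported on the square and using Br\"and\'en's bivariate criterion. The axioms then follow directly:
\begin{itemize}
\item A horizontal long edge $[e_{S+i},e_{S+j}]$ forces $a_Sa_{S+i+j}=0$, since otherwise the two diagonals of the square share a midpoint. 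Hence $a_{S+i}a_{S+j}>0$, which is Axiom~2.
\item A vertical long edge $[e_S,e_{S+i+j}]$ forces $a_{S+i}a_{S+j}=0$, hence $a_Sa_{S+i+j}<0$, which is Axiom~3.
\item If all four coefficients are nonzero with the odd sign at $S+i$ or $S+j$, then $a_{S+i}a_{S+j}<0<a_Sa_{S+i+j}$, a contradiction. So the odd vertex is $S$ or $S+i+j$, the closest or furthest from $\varnothing$, which is Axiom~4.
\end{itemize}
The minors and twisting of step~(i) are unnecessary for reference set $\varnothing$; they matter only for changing the reference basis, as in \cref{lem:f-delta-G-coeffs}.
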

  Since determinantal polynomials are always real stable, this result is a generalization of the results from \cite[Section 4]{booth2001orientedlagrangian} and \cite{boege2026signpatterns} from the determinantal setting to arbitrary real stable polynomials.

  We also establish a connection between stable polynomials over Puiseux series and valuated $\Delta$-matroids.
  \begin{theorem*}[\cref{thm:stable-implies-val-delta-matroid}]
    If $f = \sum_{S\subseteq [n]} c_S x^S \in \CC\lbbrace t\rbbrace[x_1,\dots,x_n]$ is stable, then $\nu: S \mapsto \val(c_S)$ is a valuated $\Delta$-matroid in the sense of \cite{cheung2025valuateddeltamatroidsprincipal}.
  \end{theorem*}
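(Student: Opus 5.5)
The plan is to reduce to the definition of valuated $\Delta$-matroids in \cite{cheung2025valuateddeltamatroidsprincipal}. There, $\nu$ is a valuated $\Delta$-matroid if its domain is a $\Delta$-matroid and it satisfies the valuated symmetric exchange axiom: for all $S,T$ in the support and every $i \in S \triangle T$ there is $j \in (S\triangle T)\setminus\{i\}$ (or $j=i$) with
\[
\nu(S)+\nu(T) \ge \nu(S\triangle\{i,j\}) + \nu(T\triangle\{i,j\}).
\]
Equivalently, the tropical polynomial $\bigoplus_S \nu(S)\odot x^S$ has all of its regular-subdivision cells being $\Delta$-matroid polytopes, i.e.\ all edges are parallel to $e_i$ or $e_i\pm e_j$. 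I will use whichever form the paper adopts. The support condition itself is Br\"and\'en's theorem, applied to $f$ over the algebraically closed field $\CC\lbbrace t\rbbrace$; the support-theorem argument only uses closure of stability under specialization and differentiation.

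The key strategy is \emph{initial forms}. Fix a weight vector $w\in\RR^n$ (rational suffices by density). Substitute $x_k \mapsto t^{w_k}x_k$, which preserves stability because $t^{w_k}$ is a positive element in the ordered real closed field of real Puiseux series. Multiply by $t^{-m}$, where $m=\min_S(\val(c_S)+w\cdot S)$. The resulting polynomial has coefficients of valuation $\ge 0$ and some coefficient of valuation exactly $0$. Taking $t\to 0$, i.e.\ the leading terms, gives
\[
\mathrm{in}_w f=\sum_{S:\ \val(c_S)+w\cdot S=m} \mathrm{lc}(c_S)\,x^S \in \CC[x].
\]
I need a Hurwitz-type lemma showing that $\mathrm{in}_w f$ is stable (or identically zero, which cannot happen here). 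For convergent Puiseux series this follows from Hurwitz's theorem on the limit $t\to 0^+$ of stable polynomials $f_t$. Alternatively, one can argue purely algebraically: stability over $\CC\lbbrace t\rbbrace$ means there are no roots with all coordinates in the upper half-plane, and a root of $\mathrm{in}_w f$ in the open upper half-plane would lift by a Newton--Puiseux/Hensel-type argument to a root of $f$ in the upper half-plane. I expect this lemma to be the main obstacle, since the definition of ``upper half-plane'' over Puiseux series and the lifting must be handled carefully. My first attempt is the analytic Hurwitz route, if the paper's Puiseux series are convergent, with a reduction to the univariate case via restriction to lines $x = a + \lambda b$ with $a\in\RR^n$ and $b\in\RR^n_{>0}$.

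Given the lemma, Br\"and\'en's support theorem applied to $\mathrm{in}_w f$ shows that each face $\{S : \val(c_S)+w\cdot S = m\}$ of the regular subdivision induced by $\nu$ is a $\Delta$-matroid. This is exactly the polyhedral characterization: every cell of the regular subdivision of $\conv\{e_S\}$ lifted by $\nu$ is a $\Delta$-matroid polytope. By the equivalence in \cite{cheung2025valuateddeltamatroidsprincipal}, between cells being $\Delta$-matroid polytopes (equivalently, all edges being of the form $e_i$ or $e_i\pm e_j$) and the valuated exchange axiom, $\nu$ is a valuated $\Delta$-matroid. If that equivalence is not available in the form needed, I will instead derive the exchange axiom directly. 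Given $S,T$ and $i$, choose $w$ so that $S$ and $T$ are minimizers after suitable perturbation, restrict to the face containing $S$ and $T$, and apply the ordinary symmetric exchange in the $\Delta$-matroid $\supp(\mathrm{in}_w f)$. This last step is a standard local-to-global argument, as in Murota's theory for valuated matroids.
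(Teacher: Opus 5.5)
Your main line is the paper's argument. You choose a weight $w$ cutting out a cell, form $t^{-m}f(t^{w_1}x_1,\dots,t^{w_n}x_n)$, pass to the $t\to 0$ limit to get a nonzero stable initial form over $\CC$, and apply Br\"and\'en's support theorem. The paper handles the initial-form step exactly along your analytic route: Tarski transfer gives stability of $g(t)$ for small real $t>0$, and Hurwitz handles the limit. Your restriction to rational $w$ is slightly more careful than the paper's $w\in\RR^n$, since $t^{w_k}$ must be a Puiseux series. The paper only treats edges, where the initial form is $ax^A+bx^B$ and the support theorem gives $|A\Delta B|\le 2$. Treating every cell, as you do, is equivalent.

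The genuine error is in how you close the argument. The inequality you attribute to \cite{cheung2025valuateddeltamatroidsprincipal}, $\nu(S)+\nu(T)\ge \nu(S\Delta\{i,j\})+\nu(T\Delta\{i,j\})$, is a two-sided (strong) valuated exchange axiom. It is not that paper's definition, it is not equivalent to every cell being a $\Delta$-matroid polytope, and it is false for stable polynomials. When $\nu$ is constant on its support, it reduces to the strong exchange property of the support. Take $f=2-x_1-x_2-x_3+x_1x_2x_3$ from \cref{ex:no-strong-exchange}. It is real stable, with $\nu\equiv 0$ on $\{\varnothing,1,2,3,123\}$. For $S=\varnothing$, $T=\{1,2,3\}$, $i=1$, every choice of $j$ puts a two-element set on the right-hand side, so the right side is $\infty$. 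Hence your final step (``by the equivalence \dots\ $\nu$ satisfies the valuated exchange axiom'') invokes a false equivalence. Your fallback, deriving that inequality by restricting to a face and using exchange in $\supp(\mathrm{in}_w f)$, is an attempt to prove a false statement. Here the cell for $w=0$ is exactly $\{\varnothing,1,2,3,123\}$, which has no strong exchange. The two-sided inequality does hold in the constant-parity setting of \cite{branden2010discreteconcavity}, which is presumably where the claimed equivalence comes from, but it fails in general. The fix is to stop one sentence earlier: ``every cell of the regular subdivision is a $\Delta$-matroid polytope'' is precisely the definition used, so the proof is complete there, and the exchange-axiom detour must be deleted.
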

  This again connects to a result from \cite{cheung2025valuateddeltamatroidsprincipal} showing that valuations of determinantal polynomials give valuated $\Delta$-matroids, and is a natural extension of \cite[Theorem 4]{branden2010discreteconcavity} beyond the constant-parity case. However, as shown in \cref{ex:no-strong-exchange}, supports of stable polynomials need not satisfy the strong exchange property, so these valuations do not satisfy the definition of valuated $\Delta$-matroids described in \cite{calvert2025quadraticexchangeequationscoxeter}.

  \section{Preliminaries}
  \subsection{Delta Matroids}
  We begin by recalling the definition and some basic facts about $\Delta$-matroids.
  \begin{definition}
    A \emph{$\Delta$-matroid} is a non-empty collection $\cB \subseteq 2^{[n]}$ such that for any $X,Y \in \cB$ and any $i \in X \Delta Y$, there exists $j \in X \Delta Y$ such that $X \Delta\{i,j\}\in\cB$.

    We say a $\Delta$-matroid has the \emph{strong exchange property} if for all $X,Y \in \cB$ and $i \in X \Delta Y$ there exists $j \in X\Delta Y$ such that $X\Delta \{i,j\}$ and $Y\Delta \{i,j\}$ are both in $\cB$.
  \end{definition}
  Note that unlike matroids, not all $\Delta$-matroids have the strong exchange property.

  \begin{example}
    Consider the $\Delta$-matroid with bases $\cB = \{\varnothing, 1, 2, 3, 123\}$. One can check that this is a $\Delta$-matroid. However, it does not have the strong exchange property. If $X = \varnothing, Y = 123$, there are no $i,j$ such that $X \Delta \{i,j\}$ and $Y\Delta \{i,j\}$ are both in $\cB$.
  \end{example}

  Similarly to matroids, $\Delta$-matroids can also be characterized in terms of polytopes.
  \begin{proposition}
    A nonempty collection $\cB\subseteq [n]$ is a $\Delta$-matroid if and only if all edges of $\conv\{e_S : S \in \cB\}$ are in directions $\pm e_i, e_i \pm e_j$.
  \end{proposition}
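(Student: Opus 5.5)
We prove the two implications separately. Throughout, write $P = \conv\{e_S : S\in\cB\}$ and use that every vertex of $P$ is some $e_S$ with $S\in\cB$, since all points $e_S$ are vertices of the cube.

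\emph{Edges in the allowed directions imply the exchange axiom.} Take $X,Y\in\cB$ and $i\in X\Delta Y$. After the reflection $x_k\mapsto 1-x_k$ for $k\in X$ (which maps $\cB$ to $\cB\Delta X$, preserves the set of edge directions up to sign, and sends $X$ to $\varnothing$), we may assume $X=\varnothing$, so $i\in Y$. Then $e_Y-e_\varnothing = e_Y$ lies in the tangent cone of $P$ at the vertex $0$. That cone is generated by the edge directions at $0$, each of the form $\pm e_k$ or $e_k\pm e_l$. Because $P\subseteq[0,1]^n$, any edge leaving $0$ has nonnegative coordinates, so its direction is $e_k$ or $e_k+e_l$, or $e_k-e_l$ with the negative coordinate impossible. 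Hence the edges at $0$ go to $e_{\{k\}}$ or $e_{\{k,l\}}$, and those endpoints lie in $\cB$.

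Write $e_Y=\sum_r \lambda_r d_r$ with $\lambda_r>0$ and $d_r$ edge directions at $0$. Since coordinate $i$ of $e_Y$ is positive, some $d_r$ has $i$ in its support. Every coordinate of every $d_r$ is nonnegative, so if $d_r$ has support $\{i,l\}$ then $l\in Y$. Thus $\{i\}$ or $\{i,l\}$ with $l\in Y=X\Delta Y$ is in $\cB$, which is the exchange with $j=i$ or $j=l$.

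\emph{The exchange axiom implies edges in the allowed directions.} Let $[e_X,e_Y]$ be an edge of $P$. Choose a linear functional $c$ maximized on $P$ exactly on this edge. Suppose $|X\Delta Y|\ge 3$, or $|X\Delta Y|=2$ with $X\subseteq Y$ or $Y\subseteq X$ (which gives direction $e_i+e_j$, also allowed). More precisely, the allowed directions are exactly those with $|X\Delta Y|\le 2$, so assume $|X\Delta Y|\ge3$ and derive a contradiction.

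For $i\in X\Delta Y$, define $\delta_k=+1$ if $k\in Y\setminus X$ and $\delta_k=-1$ if $k\in X\setminus Y$, so that $e_Y-e_X=\sum_{k\in X\Delta Y}\delta_k e_k$. Applying the exchange to $(X,Y,i)$ gives $j$ with $X\Delta\{i,j\}\in\cB$, and $c(e_{X\Delta\{i,j\}})-c(e_X)=c_i\delta_i+c_j\delta_j$ (or $c_i\delta_i$ if $j=i$). Similarly, applying the exchange to $(Y,X,i)$ gives $j'$ with $c(e_{Y\Delta\{i,j'\}})-c(e_Y)=-(c_i\delta_i+c_{j'}\delta_{j'})$.

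Maximality forces these quantities to be $\le0$ and $\ge0$ respectively, with equality only if the new point lies on the edge. Since $|X\Delta Y|\ge 3$, that new point is neither $e_X$ nor $e_Y$, and the only $0/1$ points on the segment are its endpoints, so both inequalities are strict.

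Set $a_k=c_k\delta_k$; then $\sum_{k\in X\Delta Y}a_k=c(e_Y)-c(e_X)=0$. The exchanges give, for every $i$, some $j$ with $a_i+a_j<0$, and some $j'$ with $a_i+a_{j'}>0$ (where $j=i$ means the single term $a_i$).

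\emph{Main obstacle.} The difficulty is turning these pairwise inequalities into a contradiction. Taking $i$ with $a_i$ maximal, the condition $a_i+a_j<0$ forces the minimal element $a_m$ to satisfy $a_i+a_m<0$. Symmetrically, taking $i=m$ gives some $j'$ with $a_m+a_{j'}>0$, so $a_m+a_{\max}>0$. These two conclusions contradict each other.

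This shows $|X\Delta Y|\le 2$, so every edge direction is $\pm e_i$ or $\pm e_i\pm e_j$, as required. The expected subtle point is the degenerate case $j=i$, where the exchange changes a single coordinate; it is handled by reading $a_i+a_j$ as $a_i$ alone, and the same max/min argument goes through unchanged.
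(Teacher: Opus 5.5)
The paper gives no proof of this proposition: it is recalled as a standard fact about $\Delta$-matroids. It is usually obtained either from the Coxeter-matroid theory of Gelfand--Serganova and Borovik--Gelfand--White (symmetric exchange $\Leftrightarrow$ Bruhat maximality $\Leftrightarrow$ root-parallel edges), or from Bouchet's greedy-algorithm characterization. So there is no in-paper argument to match, and your proposal stands on its own as a correct, self-contained proof.

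\begin{itemize}
\item In the ``if'' direction, twisting by $X$ so the reference set becomes $\varnothing$ and reading the exchange off the tangent cone at $0$ works. The edges at $0$ must end at $e_{\{k\}}$ or $e_{\{k,l\}}$. Nonnegativity of the generators then forces a generator whose support contains $i$ to be supported inside $Y$.
\item In the ``only if'' direction, the exposing functional together with your extremal argument is a neat elementary substitute for the greedy/Bruhat machinery. The $X$-side exchange at the index maximizing $a_k$ gives $a_{\max}+a_{\min}<0$. The $Y$-side exchange at the index minimizing $a_k$ gives $a_{\min}+a_{\max}>0$. Strictness is correctly justified by $|X\Delta Y|\ge 3$: the exchanged set is neither $X$ nor $Y$, and the only $0/1$ points of the segment are its endpoints.
\end{itemize}

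Your route buys brevity and complete elementarity. The literature route buys more: it also yields the greedy-algorithm characterization and the uniform statement for all Coxeter matroids, which is the setting of the cited oriented theory of Booth--Borovik--Gelfand--White.

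Three cosmetic points:
\begin{itemize}
\item The identity $\sum_k a_k=0$ is never used and can be dropped.
\item The tangled sentence ``Suppose $|X\Delta Y|\ge 3$, or $|X\Delta Y|=2$ \dots'' should simply say that the allowed directions are exactly the differences $e_Y-e_X$ with $|X\Delta Y|\le 2$.
\item In the degenerate case $j=i$, the step is not literally ``unchanged.'' You need the one-line remark that $a_{\max}<0$ forces $a_{\min}\le a_{\max}<0$, and dually that $a_{\min}>0$ forces $a_{\max}>0$.
\end{itemize}
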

  In particular, this means that every edge in a $\Delta$-matroid polytope is contained within some two-dimensional face of the hypercube.

  In this paper, we consider two generalizations of $\Delta$-matroids, namely oriented $\Delta$-matroids and valuated $\Delta$-matroids. Oriented $\Delta$-matroids were first defined in \cite{booth2001orientedlagrangian}, while the definition valuated $\Delta$-matroids that we will use comes from \cite{cheung2025valuateddeltamatroidsprincipal}.

  \subsubsection{Oriented $\Delta$-matroids}
  Oriented $\Delta$-matroids were first defined in \cite{booth2001orientedlagrangian} under the name \emph{oriented Lagrangian matroids}. In order to present their definition, we must discuss the edges and faces that can arise in a $\Delta$-matroid polytope.

  First, we fix a reference basis $F \in \cB$. If $\cB$ is a $\Delta$-matroid, then all edges in the polytope $P_\cB = \conv\{e_B : B \in \cB\}$ have length one or two (with respect to the $\ell_1$-norm). Call edges of length one \emph{short edges} and edges of length two \emph{long edges}.

  \begin{definition}[\cite{booth2001orientedlagrangian}]
    Suppose $[e_A, e_B]$ is a long edge in $P_\cB$. We say that it is a \emph{horizontal long edge} (with respect to $F$) if $|A \Delta F| = |B \Delta F|$. Otherwise, we say that it is a \emph{vertical long edge} (with respect to $F$).
  \end{definition}

  This now allows us to define orientations with respect to $F$.
  \begin{definition}[\cite{booth2001orientedlagrangian}]\label{def:delta-matroid-orientation}
    We say a function $s_F: \cB \to \{\pm 1\}$ is an \emph{orientation with respect to $F$} if the following axioms are satisfied:

    \paragraph{\textsc{Axiom 1.}} $\cB$ is a $\Delta$-matroid.

    \paragraph{\textsc{Axiom 2.}} If there is a horizontal long edge (relative to $F$) between $A$ and $B$, then $s_F(A) = s_F(B)$.

    \paragraph{\textsc{Axiom 3.}} If there is a vertical long edge (relative to $F$) between $A$ and $B$, then $s_F(A) = -s_F(B)$.

    \paragraph{\textsc{Axiom 4.}} In a two-dimensional face of the hypercube, if three of the vertices have the same sign, then the vertex with the opposite sign is either the closest or furthest vertex from $F$.

    \paragraph{\textsc{Axiom 5.}} $s_F(F) = +1$.
  \end{definition}

  Importantly, note that Axioms 2-4 in \cref{def:delta-matroid-orientation} can each be checked by considering only two-dimensional faces of the hypercube $[0,1]^n$.

  In order to make this definition independent of our reference basis, we extend to a function on $\cB \times \cB$.
  \begin{definition}[\cite{booth2001orientedlagrangian}]\label{def:oriented-delta-matroid}
    We say the pair $(\cB,s)$ with $\cB\subseteq 2^{[n]}$ and $s: 2^{[n]} \times 2^{[n]} \to \{+1, -1, 0\}$ is an \emph{oriented $\Delta$-matroid} if it satisfies all of the following properties:
    \begin{itemize}
      \item $\cB$ is a $\Delta$-matroid.
      \item $s = 0$ whenever either of its arguments is not a basis of $\cB$.
      \item For all $F \in \cB$, $s_F(\cdot) \coloneqq s(F, \cdot)$ is an orientation of $M$ with respect to $F$.
      \item For all $F,G,H \in \cB$, we have
        \[s(G,H) = (-1)^{|G \setminus (F \cup H)|} s(F, G) s(F, H).\]
    \end{itemize}
  \end{definition}

  Every orientation of $M$ with respect to a reference basis can be extended to an oriented $\Delta$-matroid by taking
  \[s(G,H) = (-1)^{|G \setminus (F \cup H)|}s_F(G) s_F(H).\]
  We say two orientations $s_F, s_G$ with respect to reference bases $F, G$ are \emph{equivalent} if they extend to the same $s$.

  \subsubsection{Valuated $\Delta$-matroids}
  Two different definitions of valuated $\Delta$-matroids were introduced in \cite{cheung2025valuateddeltamatroidsprincipal} and \cite{calvert2025quadraticexchangeequationscoxeter}, respectively. In this paper, we use the \cite{cheung2025valuateddeltamatroidsprincipal} definition, as our $\Delta$-matroids do not necessarily satisfy the strong exchange axiom.

  \begin{definition}[\cite{cheung2025valuateddeltamatroidsprincipal}]
    A function $p: \{0,1\}^n \to \RR\cup\{\infty\}$ is a \emph{valuated $\Delta$-matroid} if every cell in the induced regular subdivision is a $\Delta$-matroid polytope. Equivalently, $p$ is a valuated $\Delta$-matroid if and only if all the edges of its induced subdivision have length at most two.
  \end{definition}

  \subsection{Stable Polynomials}
  The other object under study in this paper is stable polynomials.

  \begin{definition}
    We say a polynomial $f \in \CC[x_1,\dots,x_n]$ is \emph{stable} if $f(z_1,\dots,z_n) \neq 0$ whenever $\Im(z_1),\dots,\Im(z_n) > 0$. If furthermore $f$ has real coefficients, we say it is \emph{real stable}.
  \end{definition}

  A classical result of \citeauthor{branden2007halfplane} shows that the support of any multiaffine stable polynomial is a $\Delta$-matroid \cite[Corollary 3.3]{branden2007halfplane}. In the same paper, \citeauthor{branden2007halfplane} also gives a characterization for testing whether a multiaffine polynomial is real stable using Rayleigh differences. (Recall that we say a polynomial is \emph{multiaffine} if it has degree at most one in each variable.)

  \begin{definition}
    For a polynomial $f \in \CC[x_1,\dots,x_n]$ and $1 \leq i,j\leq n$, we define the \emph{Rayleigh difference}
    \[\Delta_{ij}f = \frac{\dl f}{\dl x_i} \frac{\dl f}{\dl x_j} - f \frac{\dl^2 f}{\dl x_i \dl x_j}.\]
  \end{definition}

  Real stability is then characterized by global nonnegativity of all Rayleigh differences.
  \begin{theorem}[{\cite[Theorem 5.6]{branden2007halfplane}}]
    Let $f \in \RR[x_1,\dots, x_n]$ be multi-affine. Then $f$ is real stable if and only if for all $x \in \RR^n$ and all $1 \leq i,j\leq n$,
    \[\Delta_{ij}f(x) \geq 0.\]
  \end{theorem}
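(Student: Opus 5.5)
The plan is to handle the diagonal and off-diagonal Rayleigh differences separately, reduce the forward direction to a bivariate computation, and prove the converse by induction on $n$ via a Pick-function (Hermite--Biehler type) argument. Throughout, for $i\neq j$ I write $f = a + b x_i + c x_j + d x_i x_j$ with $a,b,c,d$ multiaffine in the remaining variables. A direct computation gives
\[\Delta_{ij}f = (b+dx_j)(c+dx_i) - (a+bx_i+cx_j+dx_ix_j)d = bc-ad.\]
For $i=j$, multiaffinity gives $\dl^2 f/\dl x_i^2=0$, so $\Delta_{ii}f=(\dl f/\dl x_i)^2\ge 0$ holds automatically. Hence only the $i\ne j$ case matters, and there $\Delta_{ij}f$ does not depend on $x_i,x_j$.

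\textbf{Forward direction.} Suppose $f$ is real stable, fix $i\ne j$, and fix real values $y$ for the other variables. The specialization $(x_i,x_j)\mapsto f$ is a limit of stable polynomials obtained by substituting $y+\epsilon\sqrt{-1}\,\mathbf 1$ with $\epsilon\downarrow 0$. By Hurwitz's theorem it is therefore either stable or identically zero, and in the latter case $a=b=c=d=0$ at $y$, so $bc-ad=0$. Otherwise I have a real bivariate multiaffine polynomial $a+bx+cy+dxy$, and its zeros satisfy $y=-\mu(x)$ with $\mu(x)=(bx+a)/(dx+c)$. When $bc-ad\neq 0$, $\mu$ is a real Möbius map of determinant $bc-ad$. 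If $bc-ad<0$, then $\mu$ maps the upper half plane to the lower half plane, so $x,y$ both in the upper half plane give a zero, contradicting stability. Thus $bc-ad\ge 0$. The degenerate case $bc-ad=0$ is allowed, so nothing further is needed.

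\textbf{Converse.} I induct on $n$, the case $n=1$ being trivial since real linear polynomials are stable. Write $f=g+x_nh$ with $g=f|_{x_n=0}$ and $h=\dl f/\dl x_n$. Both inherit the hypothesis:
\begin{itemize}
  \item $\Delta_{ij}g=(\Delta_{ij}f)|_{x_n=0}$;
  \item $\Delta_{ij}h=\lim_{x_n\to\infty}\Delta_{ij}f/x_n^2$.
\end{itemize}
So $g$ and $h$ are real stable (or zero) by induction. If $h\equiv 0$ we are done. Otherwise $h\ne0$ on the open upper half plane $\mathcal H^{n-1}$, and $f(z)=0$ forces $z_n=-g(z')/h(z')$. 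It therefore suffices to show $\Im\bigl(g/h\bigr)\ge 0$ on $\mathcal H^{n-1}$. The hypothesis $\Delta_{in}f\ge0$ reads $h\,\dl_i g-g\,\dl_i h\ge 0$ on $\RR^{n-1}$, i.e.\ $\phi=g/h$ is nondecreasing in each real variable where defined.

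Any $z'\in\mathcal H^{n-1}$ can be written as $x+\sqrt{-1}\,v$ with $x\in\RR^{n-1}$ and $v\in\RR^{n-1}_{>0}$. So I restrict to the line $t\mapsto x+tv$ and consider the real univariate rational function $\psi(t)=\phi(x+tv)$. It satisfies $\psi'(t)=\sum_i v_i\,\dl_i\phi\ge0$ away from poles. I then claim that a real rational function that is nondecreasing on $\RR$ between its poles is a Pick function, i.e.\ $\Im\psi(t)\ge0$ for $\Im t>0$. To prove this I use partial fractions: monotonicity forces every pole to be simple with negative residue and the polynomial part to be affine with nonnegative slope. Concretely,
\[\psi(t)=\alpha t+\beta-\sum_k \frac{c_k}{t-p_k},\qquad \alpha,c_k\ge0,\ p_k\in\RR,\]
and each summand has nonnegative imaginary part on $\mathcal H$. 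Evaluating at $t=\sqrt{-1}$ gives $\Im\phi(z')\ge0$, completing the induction.

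The main obstacle is this last step. I need to rule out higher-order poles; near a double pole $\psi$ has the same sign on both sides, which is incompatible with being nondecreasing on both sides. I also need to handle the case where $\phi$ restricted to the line is constant or where $h(x+tv)\equiv0$ identically on the line. In that case I would perturb $x$ slightly and invoke Hurwitz's theorem again to pass to the limit.
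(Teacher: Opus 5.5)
The paper gives no proof of this statement; it is quoted from Br\"and\'en. I am therefore judging your argument on its own terms.

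\textbf{What is correct.} The forward direction is fine. So are the reductions in the converse: the computation $\Delta_{ij}f=bc-ad$, the inheritance of the hypothesis by $g$ and $h$, the identity $\Delta_{in}f=h\,\dl_i g-g\,\dl_i h$, and the reduction to showing $\Im(g/h)\ge0$ on $\mathcal H^{n-1}$.

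\textbf{The gap.} It is the univariate lemma that a real rational function which is nondecreasing between its poles is a Pick function. This lemma is false.
\begin{itemize}
\item $\psi(t)=t^3$ has $\psi'=3t^2\ge0$, and $\psi(t)=-1/t^3$ has $\psi'=3/t^4>0$ on both sides of its triple pole. Yet both satisfy $\Im\psi(\sqrt{-1})=-1$.
\item Your sign argument excludes only poles of even order, and it gives no control of the polynomial part.
\item The reality of the poles, which you also use, comes from $h$ being real stable, not from monotonicity.
\end{itemize}
The lemma also cannot be rescued with the other information you extract. Take $f=x_1x_2x_3+x_4$, so $g=x_1x_2x_3$ and $h=1$ are both real stable. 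Along the line $t(1,1,1)$ every $\dl_i\phi=t^2$ is nonnegative. Yet $f(\sqrt{-1},\sqrt{-1},\sqrt{-1},\sqrt{-1})=0$. So your argument discards information it needs: the conditions $\Delta_{ij}f\ge0$ for $i,j<n$ say more than that $g$ and $h$ are real stable. Here $g+\lambda h=x_1x_2x_3+\lambda$ is not real stable for $\lambda\ne0$.

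\textbf{The missing idea.} Apply the induction hypothesis to every real specialization $f|_{x_n=\lambda}=g+\lambda h$ with $\lambda\in\RR$, not just to $g$ and $h$. This is legitimate because $\Delta_{ij}(g+\lambda h)=(\Delta_{ij}f)|_{x_n=\lambda}$ for $i,j<n$. The argument then runs as follows.
\begin{enumerate}
\item Each $g+\lambda h$ is real stable or zero.
\item Hence $\phi=g/h$, which is holomorphic on $\mathcal H^{n-1}$, is either a real constant or omits every real value there.
\item By connectedness, $\Im\phi$ then has constant sign on $\mathcal H^{n-1}$.
\item Your monotonicity fixes that sign. If $\phi$ is nonconstant, there is $x\in\RR^{n-1}$ with $h(x)\ne0$ and $\sum_i\dl_i\phi(x)>0$, since each $\dl_i\phi\ge0$. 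Then $\Im\phi(x+\sqrt{-1}\,\epsilon\mathbf 1)=\epsilon\sum_i\dl_i\phi(x)+O(\epsilon^2)>0$ for small $\epsilon>0$.
\end{enumerate}
On a line, step 1 says that all $G+\lambda H$ are real-rooted, where $G=g(x+tv)$ and $H=h(x+tv)$. By Obreschkoff's theorem, $G$ and $H$ therefore interlace, and interlacing is exactly what excludes $t^3$-type behaviour.

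With this step, the partial fractions and the line restriction become unnecessary. Your worry about $h(x+tv)\equiv0$ was moot anyway, since $h(x+\sqrt{-1}\,v)\ne0$.
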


  In particular, we will require the bivariate consequence of this statement.
  \begin{proposition}[{\cite[Example 5.7]{branden2007halfplane}}]
    Let $f = a + bx_1 + c x_2 + dx_1x_2 \in \RR[x_1,x_2]$. Then $f$ is real stable if and only if $bc-ad \geq 0$.
  \end{proposition}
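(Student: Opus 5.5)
The statement to prove is the bivariate criterion: $f = a + bx_1 + cx_2 + dx_1x_2$ with real coefficients is real stable if and only if $bc - ad \geq 0$. The plan is to specialize the Rayleigh-difference theorem (\cite[Theorem 5.6]{branden2007halfplane}) to $n=2$ and compute the Rayleigh differences directly.

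\textbf{Computing the Rayleigh differences.} First I handle the diagonal ones. Since $f$ is multiaffine, $\partial^2 f/\partial x_i^2 = 0$, so
\[\Delta_{ii} f = \left(\frac{\partial f}{\partial x_i}\right)^2 \geq 0\]
on all of $\RR^2$. These impose no condition. Next I compute the off-diagonal difference, using
\[\frac{\partial f}{\partial x_1} = b + dx_2, \qquad \frac{\partial f}{\partial x_2} = c + dx_1, \qquad \frac{\partial^2 f}{\partial x_1\partial x_2} = d.\]
This gives
\begin{align*}
\Delta_{12} f &= (b+dx_2)(c+dx_1) - d(a+bx_1+cx_2+dx_1x_2)\\
&= bc + bdx_1 + cdx_2 + d^2x_1x_2 - ad - bdx_1 - cdx_2 - d^2x_1x_2\\
&= bc - ad.
\end{align*}
So $\Delta_{12}f$ is the constant $bc-ad$, and $\Delta_{21}f$ is the same constant by symmetry.

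\textbf{Concluding.} By the theorem, $f$ is real stable if and only if $\Delta_{ij}f(x) \geq 0$ for all $x\in\RR^2$ and all $i,j$. The diagonal differences always satisfy this, so the condition reduces to $bc - ad \geq 0$, which is the claimed equivalence.

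\textbf{Possible subtleties.} The only delicate points are degenerate cases, such as $f \equiv 0$ or $f$ depending on fewer variables. These are covered automatically by the theorem under whatever convention on the zero polynomial it uses.

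A direct proof avoiding the theorem is also available. For fixed $x_2 = z_2$ in the upper half-plane, the unique root of $f$ in $x_1$ is the Möbius image
\[z_1 = -\frac{a + cz_2}{b + dz_2}.\]
A real Möbius map $z \mapsto (\alpha z + \beta)/(\gamma z + \delta)$ sends the upper half-plane to the lower half-plane exactly when $\alpha\delta - \beta\gamma \leq 0$. Here $\alpha = -c$, $\beta = -a$, $\gamma = d$, $\delta = b$, so $\alpha\delta - \beta\gamma = -(bc-ad)$, and the condition is again $bc - ad \geq 0$. In this route the hard part is the boundary case $bc=ad$, where the Möbius map is constant; that is why I would rely on the cited theorem instead.
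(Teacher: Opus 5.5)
Your proof is correct and follows the paper's route: the paper quotes the proposition as the bivariate specialization of \cite[Theorem 5.6]{branden2007halfplane}, and your computation $\Delta_{11}f=(\partial_1 f)^2$, $\Delta_{22}f=(\partial_2 f)^2$, $\Delta_{12}f=\Delta_{21}f=bc-ad$ is exactly that specialization. The one caveat is the one you flagged: under the paper's literal definition $f\equiv 0$ is not stable even though $bc-ad=0$, so the proposition and the quoted Theorem 5.6 both tacitly count the zero polynomial as stable; separately, the case $bc=ad$ in your M\"obius aside is not actually hard, since there the root in $x_1$, whenever it exists, is a real constant.
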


  \section{Real Stability and Oriented Delta Matroids}
  In this section, we show that sign patterns of real stable polynomials give oriented $\Delta$-matroids. First, we prove that we get an orientation with respect to $F = \varnothing$ when fix our reference basis to be the empty set. Then, we show how real-stability preserving operations allow us to interpret extending to an oriented $\Delta$-matroid in the language of polynomials.

  \begin{theorem}\label{thm:real-stable-implies-orientation}
    Let $f = \sum_{S\subseteq [n]}a_S x^S$ be a multiaffine polynomial, and consider the map $\sigma: \supp(f) \to \{\pm 1\}$ given by $S \mapsto \sign(a_S)$. If $f$ is real stable and $a_{\varnothing} > 0$, then $\sigma$ is an orientation of $\supp(f)$ with respect to $\varnothing$.
  \end{theorem}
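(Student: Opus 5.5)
Axiom 1 is Brändén's support theorem, since a real stable multiaffine polynomial is stable. Axiom 5 is the hypothesis $a_\varnothing>0$. The plan is to reduce Axioms 2--4 to a check on two-dimensional faces of the hypercube, and then read that check off from the bivariate criterion $bc-ad\ge 0$ of \cite[Example 5.7]{branden2007halfplane}.

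\textbf{Reduction to a square face.} Fix a square face with vertices $S, S\cup i, S\cup j, S\cup ij$, where $i,j\notin S$. The vertex closest to $\varnothing$ is $S$ and the furthest is $S\cup ij$. A long edge $[e_A,e_B]$ of $P_{\supp(f)}$ is a diagonal of such a square. It is horizontal exactly when it joins $S\cup i$ and $S\cup j$, and vertical exactly when it joins $S$ and $S\cup ij$.

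To isolate the coefficients on this face, I will use operations that preserve real stability. Differentiate with respect to each $x_k$, $k\in S$. Then set every $x_k$ with $k\notin S\cup\{i,j\}$ equal to $0$; this specialization to a real value preserves real stability, or the result is identically zero. The outcome is the real stable bivariate polynomial
\[g = a_S + a_{S\cup i}x_i + a_{S\cup j}x_j + a_{S\cup ij}x_ix_j.\]
Writing $g=a+bx_i+cx_j+dx_ix_j$, we get $bc-ad\ge 0$.

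\textbf{Checking the axioms on the square.}
\begin{itemize}
\item If $[S\cup i,S\cup j]$ is an edge of the polytope, then $S$ and $S\cup ij$ cannot both be in the support; otherwise the two diagonals cross at the centre of the square and neither is an edge. So $ad=0$, and $bc\ge 0$ with $b,c\neq 0$ forces $\sign b=\sign c$. This is Axiom 2.
\item If $[S,S\cup ij]$ is an edge, then similarly $bc=0$, so $-ad\ge 0$ with $a,d\ne 0$ gives opposite signs. This is Axiom 3.
\item For Axiom 4, all four coefficients are nonzero. If $b$ and $c$ had opposite signs, then $bc<0$ would force $ad<0$. Then, among the four signs, $b,c$ split and $a,d$ split, so the pattern is two and two, never three and one. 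Hence if three signs agree and the odd one is $b$ or $c$, then $b$ and $c$ have opposite signs, which we have just excluded. So the odd vertex must be $S$ or $S\cup ij$, the closest or furthest from $\varnothing$.
\end{itemize}

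\textbf{Main obstacle.} The delicate step is the reduction: edges and two-dimensional faces of $P_{\supp(f)}$ must correspond to coefficients on a hypercube square, and the stability-preserving operations must not lose the relevant coefficients. The key point for the latter is to differentiate out the variables of $S$ rather than substitute for them. Differentiating in $x_k$, $k\in S$, selects exactly the monomials containing $k$, and setting the outside variables to $0$ kills exactly the monomials containing them. So $g$ carries exactly the four coefficients $a_S,a_{S\cup i},a_{S\cup j},a_{S\cup ij}$, with their original signs.
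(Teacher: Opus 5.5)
Your proof is correct. It follows the same outline as the paper's: Axiom 1 comes from Br\"and\'en's support theorem, Axiom 5 from $a_\varnothing>0$, and Axioms 2--4 are read off from $bc-ad\ge 0$ on a square face of the cube.

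The difference is in how you reduce to the square. The paper uses the fact that initial forms of real stable polynomials are real stable. A weight vector cutting out the face $\{T : S\subseteq T\subseteq S\cup\{i,j\}\}$ yields $x^S(a+bx_i+cx_j+dx_ix_j)$. The paper's rephrasing of Axioms 2 and 3 with $a=d=0$ and $b=c=0$ implicitly takes a further initial form along the long edge itself.

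You instead differentiate in the variables of $S$ and set the variables outside $S\cup\{i,j\}$ to $0$. This extracts exactly the four coefficients of an arbitrary square, whether or not that square is a face of the Newton polytope. You then turn the edge hypothesis into $ad=0$ or $bc=0$ with the crossing-diagonals argument.

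What your route buys is the inequality $a_{S\cup i}a_{S\cup j}\ge a_S a_{S\cup ij}$ for every square. All three axioms then follow from it combinatorially, and the step from ``long edge of $P_{\supp(f)}$'' to a coefficient condition becomes explicit; the paper compresses that step into one sentence. The paper's route is shorter. It also uses the same initial-form mechanism that drives \cref{thm:stable-implies-val-delta-matroid}, where the cells to be isolated depend on the valuations and cannot be cut out by differentiation and specialization alone.
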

  \begin{proof}
    By \cite[Corollary 3.3]{branden2007halfplane}, the support of $f$ is a $\Delta$-matroid, satisfying the first condition in \cref{def:delta-matroid-orientation}. Furthermore, we assume that $a_\varnothing > 0$, which automatically gives us Axiom 5, so it just remains to check Axioms 2-4.
    
    Since real stability is preserved under taking initial forms, and since it suffices to check each axiom on the two-dimensional faces of the hypercube, we may restrict ourselves to real stable polynomials of the form
    \[f = x^A(a + b x_i + c x_j + d x_i x_j)\]
    where $A \subseteq [n]$ and $i,j \notin A$.

    In this setting, and taking our reference basis to be $F = \varnothing$, we may rephrase the axioms as follows:

    \paragraph{\textsc{Axiom 2.}} If $a = d = 0$ and $b,c \neq 0$, then $\sign(b) = \sign(c)$.
    \paragraph{\textsc{Axiom 3.}} If $b = c = 0$ and $a,d \neq 0$, then $\sign(a) = -\sign(d)$.
    \paragraph{\textsc{Axiom 4.}} If $a,b,c,d$ are all nonzero and three of them have the same sign, then the coefficient with the opposite sign must be either $a$ or $d$.

    Since $f = x^A(a + b x_i + c x_j + d x_i x_j)$ is real stable if and only if $g = a + b x_i + c x_j + d x_ix_j$ is real stable, we may further restrict ourselves to the case where $A = \varnothing$. By \cite[Example 5.7]{branden2007halfplane}, we know that $g$ is real stable if and only if $bc - ad \geq 0$, from which all of the above properties follow.

    Hence, $\sigma: \supp(f) \to \{\pm 1\}$ is an orientation of $\supp(f)$ with respect to $F = \varnothing$, as desired.
  \end{proof}

  We can extend this to other reference bases using stability-preserving operations. First, we establish that the operation in question preserves stability.

  \begin{lemma}\label{lem:f-delta-G-coeffs}
    Let $f = \sum_{S \subseteq [n]} a_S x^S$ and fix $G \subseteq [n]$. If $f$ is real stable, then $f_{\Delta G} \coloneq \sum_{S\subseteq [n]} (-1)^{|G \setminus S|}a_{S} x^{S \Delta G}$ is also real stable. 
  \end{lemma}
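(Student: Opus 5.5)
Since $f_{\Delta G}$ is obtained by treating each coordinate $k\in G$ separately, we decompose the operation into one-variable steps and show that each step preserves real stability. For a single index $k$, write $f = g + x_k h$ with $g,h$ independent of $x_k$ (this uses that $f$ is multiaffine, as in the paper's setting). The one-variable operation corresponding to $G=\{k\}$ sends a monomial $x^S$ to $(-1)^{|\{k\}\setminus S|}x^{S\Delta\{k\}}$. Terms with $k\in S$ lose $x_k$ with sign $+1$, and terms with $k\notin S$ gain $x_k$ with sign $-1$. Hence
\[ f_{\Delta\{k\}} = h - x_k g = -x_k\bigl(g + (-1/x_k) h\bigr) = -x_k\, f\big|_{x_k \mapsto -1/x_k}. \]

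The map $z\mapsto -1/z$ sends the open upper half-plane to itself, since $\Im(-1/z)=\Im(z)/|z|^2>0$. So if $\Im z_j>0$ for all $j$, then $f(z_1,\dots,-1/z_k,\dots,z_n)\neq 0$ by stability of $f$. Multiplying by $-z_k\neq 0$ keeps it nonzero. Thus $f_{\Delta\{k\}}$ is stable. Its coefficients are $\pm a_S$, so they are real, and $f_{\Delta\{k\}}$ is real stable.

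Next we iterate. Applying the one-variable operations successively for the elements $k\in G$ (in any order) produces $f_{\Delta G}$. To see this, track a monomial $a_Sx^S$. Each step for $k\in G$ toggles membership of $k$ and contributes a factor $-1$ exactly when $k\notin S$ at that moment. Since each $k$ is toggled once, starting from the original $S$, the total sign is $(-1)^{|G\setminus S|}$ and the final support set is $S\Delta G$, as required. By induction on $|G|$, $f_{\Delta G}$ is real stable.

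The main obstacle is verifying that the sign bookkeeping matches exactly. The key point is that the processed coordinates are distinct, so the later steps never revisit a toggled index. We also need to confirm that the degenerate case where $f$ is identically zero, or becomes so, causes no issue; it does not, because the operation is a bijection on monomials up to sign.
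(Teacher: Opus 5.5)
Your proof is correct and follows essentially the same route as the paper: both write $f_{\Delta G}$ as the composite of the maps $f\mapsto -x_k f(x_1,\dots,-1/x_k,\dots,x_n)$ for $k\in G$, use that $z\mapsto -1/z$ preserves the upper half-plane, and check that a monomial $x^S$ ends up with sign $(-1)^{|G\setminus S|}$. The only difference is presentation: the paper computes the image of $x^A$ under the full composite in one step, whereas you verify a single variable via $f=g+x_kh$ and then induct on $|G|$.
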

  \begin{proof}
    We claim that we obtain $f_{\Delta G}$ by iteratively applying $f \mapsto -x_i f(x_1,\dots,\frac{-1}{x_i},\dots, x_n)$ for each $i \in G$. We note that $\Im(-1/x_i) > 0$ if and only if $\Im(x_i) > 0$; in particular, this operation preserves stability, so proving this claim suffices to complete the proof.

    Iteratively applying these operations for all $i \in G$ gives us $(-1)^{|G|}x^G f(z_1,\dots, z_n)$, where $z_i = -1/x_i$ if $i \in G$ and $z_i = x_i$ otherwise. Under this map, we have
    \begin{align*}
      x^A &\mapsto (-1)^{|G|}x^G\left(\prod_{i \in G\cap A}\frac{-1}{x_i}\right)\left( \prod_{i \in A \setminus G}x_i \right)\\
        &= (-1)^{|G|}(-1)^{|G\cap A|}x^{G\setminus(G \cap A)}x^{A \setminus G}\\
        &= (-1)^{|G \setminus A|} x^{A \Delta G}.
    \end{align*}
    Since the map is linear, this gives us $f \mapsto f_{\Delta G}$, as desired.
  \end{proof}

  In particular, this means that we can deduce $s: 2^{[n]} \times 2^{[n]} \to \{0, \pm 1\}$ via stability-preserving operations.
  \begin{theorem}
    Suppose $f = \sum a_S x^S \in \RR[x_1,\dots,x_n]$ is a multiaffine real stable polynomial. Then $s(G, H) = \sign(a_G)\sign(f_{\Delta G}[x^{H \Delta G}])$, where $f_{\Delta G}[x^{H \Delta G}]$ denotes the coefficient of $x^{H\Delta G}$ in $f_{\Delta G}$. In particular, we can obtain $s_G$ using only stability-preserving operations.
  \end{theorem}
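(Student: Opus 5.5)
The plan is to compute the right-hand side explicitly and match it against the extension formula for an oriented $\Delta$-matroid. By \cref{lem:f-delta-G-coeffs}, the coefficient of $x^{S\Delta G}$ in $f_{\Delta G}$ is $(-1)^{|G\setminus S|}a_S$. Taking $S=H$ gives
\[f_{\Delta G}[x^{H\Delta G}] = (-1)^{|G\setminus H|}a_H.\]
The claimed formula therefore reads
\[s(G,H) = (-1)^{|G\setminus H|}\sign(a_G)\sign(a_H).\]
This vanishes unless both $G$ and $H$ lie in $\supp(f)$, consistent with the second bullet of \cref{def:oriented-delta-matroid}.

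First I treat the case $a_\varnothing\neq 0$. Replacing $f$ by $-f$ if necessary, which preserves real stability, I may assume $a_\varnothing>0$. Then \cref{thm:real-stable-implies-orientation} gives the orientation $s_\varnothing(S)=\sign(a_S)$ of $\supp(f)$ with respect to $\varnothing$. By the extension rule after \cref{def:oriented-delta-matroid}, with $F=\varnothing$, this orientation extends to
\[s(G,H)=(-1)^{|G\setminus H|}s_\varnothing(G)s_\varnothing(H)=(-1)^{|G\setminus H|}\sign(a_G)\sign(a_H).\]
This is exactly the displayed expression. The product $\sign(a_G)\sign(a_H)$ is insensitive to the global sign flip, so the formula holds for $f$ itself.

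For the general case, where $\varnothing\notin\supp(f)$, I choose any $F\in\supp(f)$ and pass to $g=\sign(a_F)\,f_{\Delta F}$. The lemma shows $g$ is real stable, and its constant term is $|a_F|>0$. Applying the first case to $g$ yields an oriented $\Delta$-matroid on $\supp(g)=\supp(f)\Delta F$. I then need the twisting step, which I expect to be the main obstacle. It asks that twisting by $F$ transport oriented $\Delta$-matroids, via $s'(G\Delta F,H\Delta F)=s(G,H)$ up to the natural sign bookkeeping. It also needs the identity
\[(f_{\Delta F})_{\Delta (G\Delta F)} = \pm f_{\Delta G},\]
with the sign tracked through $(-1)^{|\cdot|}$ factors; this follows from composing the substitutions $x_i\mapsto -1/x_i$ in the proof of the lemma. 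Rather than cite the twisting result, I would check it directly on the formula. Using the parity identity
\[|G\setminus(F\cup H)|\equiv |G\setminus H| + |(G\cap F)\setminus H| \pmod 2\]
together with the coefficient formula for $f_{\Delta F}$, the expression $(-1)^{|G\setminus(F\cup H)|}s_F(G)s_F(H)$ with $s_F(S)=\sign(a_F)(-1)^{|F\setminus S|}\sign(a_S)$ reduces to $(-1)^{|G\setminus H|}\sign(a_G)\sign(a_H)$. Thus every choice of reference basis produces the same $s$, and this common $s$ is the oriented $\Delta$-matroid in the statement.

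Finally, for the ``in particular'' claim, fix $G\in\supp(f)$ and set $h=\sign(a_G)f_{\Delta G}$. Its constant term is $\sign(a_G)(-1)^{|G\setminus G|}a_G=|a_G|>0$, and $h$ is real stable by the lemma. By \cref{thm:real-stable-implies-orientation}, $S\mapsto\sign(h[x^S])$ is an orientation of $\supp(f)\Delta G$ with respect to $\varnothing$. Reindexing by $S=H\Delta G$ gives
\[\sign(h[x^{H\Delta G}])=\sign(a_G)\sign(f_{\Delta G}[x^{H\Delta G}])=s(G,H)=s_G(H).\]
So $s_G$ is read off from a polynomial obtained from $f$ using only the stability-preserving operations of the lemma and scaling.
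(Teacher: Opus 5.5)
Your first case, with the harmless normalization $f\mapsto -f$, is exactly the paper's proof. You extend $s_\varnothing(S)=\sign(a_S)$ from the reference basis $\varnothing$ to get $s(G,H)=(-1)^{|G\setminus H|}\sign(a_G)\sign(a_H)$, and then recognize the right-hand side via \cref{lem:f-delta-G-coeffs}. (The paper tacitly takes $a_\varnothing>0$ and never addresses $\varnothing\notin\supp(f)$.)

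The gap is in your general case, where the ``direct check'' is false. Your parity identity is correct, but after using it you still need $|(G\cap F)\setminus H|+|F\setminus G|+|F\setminus H|$ to be even. Running through the eight membership patterns of an element in $(G,F,H)$ shows this sum is congruent to $|(F\cap H)\setminus G|$. So extending your $s_F$ by $(-1)^{|G\setminus(F\cup H)|}s_F(G)s_F(H)$ gives $(-1)^{|G\setminus H|+|(F\cap H)\setminus G|}\sign(a_G)\sign(a_H)$, not the claimed expression. Concretely, $f=x_2+x_1x_2$ is real stable with $\varnothing\notin\supp(f)$. For $F=H=\{1,2\}$ and $G=\{2\}$ you get $s_F(G)=-1$, $s_F(H)=1$ and $|G\setminus(F\cup H)|=0$, hence $s(G,H)=-1$. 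The formula in the statement, and also the extension from $F=\{2\}$, give $+1$. So your conclusion that every reference basis produces the same $s$ fails for the extension rule as written.

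The culprit is that rule, not anything about stability. Read literally on subsets of $[n]$, the last condition in \cref{def:oriented-delta-matroid} with $F=H$ gives $s(G,H)=(-1)^{|G\setminus H|}s(H,G)$. Exchanging $G$ and $H$ then forces $(-1)^{|G\Delta H|}=1$, which is impossible as soon as two bases differ in an odd number of elements (e.g.\ for $\supp(1+x_1)$). The self-consistent exponent is $|G\setminus(F\cup H)|+|(F\cap H)\setminus G|$, the number of indices where $F$ and $H$ agree but $G$ differs. This is what $|G\setminus(F\cup H)|$ counts when bases are encoded as admissible transversals of $[n]\cup[n]^*$, the setting of \cite{booth2001orientedlagrangian}. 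It coincides with the literal exponent when $F=\varnothing$, which is why your first case and the paper's argument are unaffected.

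With the corrected exponent the stray $|(F\cap H)\setminus G|$ cancels, your computation closes, and $s(G,H)=(-1)^{|G\setminus H|}\sign(a_G)\sign(a_H)$ for every choice of reference basis. Your twisting route also works then: the corrected count is invariant under $S\mapsto S\Delta F$. Transporting the first case for $g=\sign(a_F)f_{\Delta F}$ back by $S\mapsto S\Delta F$ yields exactly the claimed formula with no extra sign. So either restrict to $\varnothing\in\supp(f)$, as the paper effectively does, or redo the general case with the corrected exponent.
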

  \begin{proof}
    From \cite[Theorem 3]{booth2001orientedlagrangian}, we have 
    \[s(G,H) = s(\varnothing, G)s(\varnothing, H)(-1)^{|G \setminus H|} = \sign(a_G)\sign(a_H)(-1)^{|G \setminus H|},\]
    which by \cref{lem:f-delta-G-coeffs} is equal to $\sign(a_G)\sign(f_{\Delta G}[x^{H \Delta G}])$, as desired.
  \end{proof}

  Thus, the orientation with respect to $G$ that we acquire from the operation in \cref{lem:f-delta-G-coeffs} is equivalent to the orientation defined by the coefficients of $f$.

  \section{Stability and Valuated Delta Matroids}
  Next, we turn our attention to valuated $\Delta$-matroids. In \cite[Theorem 4]{branden2010discreteconcavity}, \citeauthor{branden2010discreteconcavity} shows that a stable polynomial over $\CC\lbbrace t\rbbrace$ with constant parity support gives rise to an $M$-convex function. In particular, it follows that a multiaffine such polynomial induces a valuated even $\Delta$-matroid in the sense of \cite{rincon2012isotropicallinearspaces}. In that same paper, \citeauthor{branden2010discreteconcavity} also shows that real-stable polynomials with nonnegative coefficients induce $M^\natural$-convex functions.

  In this section, we generalize these results to arbitrary multiaffine stable polynomials, not just those with constant parity or nonnegative real coefficients. However, the discrete functions we obtain are less nicely behaved than those in \cite{branden2010discreteconcavity} in that they may not satisfy the strong exchange property (\cref{ex:no-strong-exchange}).

  \begin{theorem}\label{thm:stable-implies-val-delta-matroid}
    Let $f = \sum_{S \subseteq [n]} c_S x^S \in \CC\lbbrace t\rbbrace[x_1,\dots,x_n]$ be a multiaffine stable polynomial, and define $\nu: 2^{[n]} \to \RR \cup \{\infty\}$ by $\nu(S) = \val(c_S)$. Then $\nu$ is a valuated $\Delta$-matroid.
  \end{theorem}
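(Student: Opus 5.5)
The plan is to reduce the statement to Brändén's support theorem by way of tropicalization: every cell of the regular subdivision induced by $\nu$ will turn out to be the Newton polytope of the support of an initial form of $f$. We use the second characterization of valuated $\Delta$-matroids, namely that every cell of the induced regular subdivision of $\{0,1\}^n$ is a $\Delta$-matroid polytope. Recall how the cells arise. They are the projections of the lower faces of the lifted polytope $\conv\{(e_S,\nu(S)) : c_S\neq 0\}$, and each one has the form
\[P_w=\conv\{e_S : \nu(S) - \langle w, e_S\rangle \text{ is minimal}\}\]
for some weight $w\in\RR^n$. One could also handle only the finite coordinates, or use the convention that infinite values correspond to zero coefficients. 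Under this convention, $P_w$ is exactly the Newton polytope of the support of the initial form $\operatorname{in}_w(f)\in\CC[x_1,\dots,x_n]$.

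To define the initial form, fix $w$ and let $m=\min_S(\nu(S)-\langle w,e_S\rangle)$. Since $w$ is real we may need fractional powers of $t$, which the Puiseux series provide. Substituting $x_i\mapsto t^{-w_i}x_i$ and multiplying by $t^{-m}$ gives
\[t^{-m}f(t^{-w_1}x_1,\dots,t^{-w_n}x_n)=\sum_S \bigl(\operatorname{lc}(c_S)\,[\nu(S)-\langle w,e_S\rangle=m] + O(t^{>0})\bigr)x^S.\]
Setting $t=0$ then yields $g=\operatorname{in}_w(f)$. This is a multiaffine polynomial with complex coefficients whose support is precisely the vertex set of $P_w$.

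The key step is to show that $g$ is stable. We argue in two parts.
\begin{enumerate}
\item The substitution $x_i\mapsto t^{-w_i}x_i$ preserves stability over the Puiseux field. Here stability over $\CC\lbbrace t\rbbrace$ is understood in the sense of \cite{branden2010discreteconcavity}, via the ordered real closed field $\RR\lbbrace t\rbbrace$ with $t>0$ infinitesimal. Multiplying a variable by the positive element $t^{-w_i}$ preserves the upper half-plane, and multiplying $f$ by the scalar $t^{-m}$ is harmless.
\item Passing to $t\to0$ (the constant term) preserves stability. This is a Hurwitz-type argument, or equivalently the fact that initial forms of stable polynomials are stable, which is the fact already used in the proof of \cref{thm:real-stable-implies-orientation}.
\end{enumerate}
Concretely, the coefficients of $g$ are limits of those of the specializations of $\tilde f$ at small real $t>0$ when the series converge. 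Alternatively, following \cite{branden2010discreteconcavity}, one works with the valuation directly: if $g(z)=0$ for some $z$ with all $\Im z_i>0$, the nonvanishing leading term would give, via Newton–Puiseux/Hensel lifting, a root of $\tilde f$ in the upper half-plane over the Puiseux field. I expect this lifting/limit step to be the main obstacle, since it depends on the precise definition of stability over $\CC\lbbrace t\rbbrace$. I would try first to cite the corresponding lemma from \cite{branden2010discreteconcavity}, which proves exactly that $\operatorname{in}_w$ of a stable polynomial over Puiseux series is stable and nonzero. The limit $g$ is nonzero because some coefficient attains the minimum $m$; Hurwitz's theorem then gives that $g$ is stable rather than identically zero.

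Once $g$ is known to be stable, \cite[Corollary 3.3]{branden2007halfplane} shows that $\supp(g)$ is a $\Delta$-matroid. By the polytope characterization of $\Delta$-matroids, $P_w=\conv\{e_S:S\in\supp(g)\}$ is then a $\Delta$-matroid polytope. Since every cell of the subdivision is of this form, and the support of $\nu$ is nonempty, it follows that $\nu$ is a valuated $\Delta$-matroid.
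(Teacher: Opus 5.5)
Your proposal is correct and follows essentially the same route as the paper. Both arguments rescale by $t^{-m}$ and $t^{\pm w_i}$, get stability of the resulting initial form from Tarski transfer to small real $t>0$ followed by a Hurwitz limit, and then invoke Br\"{a}nd\'{e}n's support theorem. The only difference is that the paper applies this to a single edge $[e_A,e_B]$, choosing $w$ so that the initial form is a binomial $ax^A+bx^B$ and hence $|A\Delta B|\le 2$, whereas you treat a whole cell at once; this is an equivalent variation.
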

  \begin{proof}
    As in \cite[Proposition 8]{branden2010discreteconcavity}, we note that we can consider stability as a first order statement over $\RR$ by identifying $\CC \cong \RR \times \RR$. Hence, Tarski's principle applies, and first-order statements about stable polynomials over $\CC$ translate to $\CC\lbbrace t\rbbrace$.

    We wish to show that every edge in the subdivision induced by $\nu$ has length at most two. Indeed, suppose that $[e_A, e_B]$ is an edge in the regular subdivision induced by $\nu$. By definition, there exists $w \in \RR^n$ such that $A,B$ are the unique subsets of $[n]$ attaining $\min_{S\subseteq [n]}\langle (w,1), (e_S, \nu(S))\rangle$. Define $m$ to be this minimum, so $\langle w, e_S\rangle + \val(c_S) \geq m$ for all $S \subseteq [n]$, with equality if and only if $S = A,B$.

    Since stability is preserved under global scaling and nonnegative scaling of variables, the polynomial
    \[g \coloneq t^{-m} f(t^{w_1}x_1,\dots, t^{w_n}x_n) \in \CC\lbbrace t\rbbrace[x_1,\dots,x_n]\]
    is stable. By Tarski's theorem, it follows that $g(t)$ is stable for all sufficiently small $t > 0$ (see, e.g., \cite[Section 1.5]{speyer2005horn} and references therein). Since stability is preserved under taking limits, by taking $t \to 0$, we obtain a stable binomial of the form $a x^A + bx^B$ for some $a,b \neq 0$. It then follows from \citeauthor{branden2007halfplane}'s support theorem \cite[Theorem 3.3]{branden2007halfplane} that $|A \Delta B| \leq 2$, as desired.
  \end{proof}

  However, we note that while stable polynomials with constant parity support or nonnegative real coefficients satisfy the strong exchange property, this is no longer true once we remove these assumptions. In particular, we cannot strengthen the conclusion of \cref{thm:stable-implies-val-delta-matroid} to $M$-convexity or $M^\natural$-convexity.
  \begin{example}\label{ex:no-strong-exchange}
    Consider
    \[f = \det\left(\begin{pmatrix}
      0 & 1 & 1 \\ 1 & 0 & 1 \\ 1 & 1 & 0
    \end{pmatrix} + \begin{pmatrix}
      x_1 & & \\ & x_2 & \\ & & x_3
    \end{pmatrix}\right) = 2 - x_1 - x_2 - x_3 + x_1x_2x_3.\]
    This is a determinantal polynomial and hence real stable. However, considered as a polynomial over $\RR\lbbrace t\rbbrace$, its tropicalization does not have the strong exchange property. In particular, valuations of stable polynomials do not necessarily satisfy the stronger notion of valuated $\Delta$-matroids defined in \cite{calvert2025quadraticexchangeequationscoxeter}.
  \end{example}
  \printbibliography

@article{boege2026signpatterns,
  author   = {Boege, Tobias and Selover, Jesse and Zubkov, Maksym},
  title    = {Sign patterns of principal minors of real symmetric matrices},
  journal  = {Linear Algebra Appl.},
  fjournal = {Linear Algebra and its Applications},
  volume   = {738},
  year     = {2026},
  pages    = {161--188},
  issn     = {0024-3795,1873-1856},
  mrclass  = {05B20 (14P10 14P25 15A15)},
  mrnumber = {5040123},
  doi      = {10.1016/j.laa.2026.02.029},
  url      = {https://doi.org/10.1016/j.laa.2026.02.029},
}

@incollection{booth2001orientedlagrangian,
  author     = {Booth, Richard F. and Borovik, Alexandre V. and Gelfand,
                Israel M. and White, Neil},
  title      = {Oriented {L}agrangian matroids},
  note       = {Combinatorial geometries (Luminy, 1999)},
  journal    = {European J. Combin.},
  fjournal   = {European Journal of Combinatorics},
  volume     = {22},
  year       = {2001},
  number     = {5},
  pages      = {639--656},
  issn       = {0195-6698,1095-9971},
  mrclass    = {05B35 (52B20 52B40 52C40)},
  mrnumber   = {1845489},
  mrreviewer = {Winfried\ Hochst\"attler},
  doi        = {10.1006/eujc.2000.0485},
  url        = {https://doi.org/10.1006/eujc.2000.0485},
}

@article{branden2007halfplane,
  author     = {Br{\"a}nd{\'e}n, Petter},
  title      = {Polynomials with the half-plane property and matroid theory},
  journal    = {Adv. Math.},
  fjournal   = {Advances in Mathematics},
  volume     = {216},
  year       = {2007},
  number     = {1},
  pages      = {302--320},
  issn       = {0001-8708,1090-2082},
  mrclass    = {05B35 (32E20 32F17 52A99)},
  mrnumber   = {2353258},
  mrreviewer = {David\ G.\ Wagner},
  doi        = {10.1016/j.aim.2007.05.011},
  url        = {https://doi.org/10.1016/j.aim.2007.05.011},
}

@article{branden2010discreteconcavity,
  author   = {Br{\"a}nd{\'e}n, Petter},
  title    = {Discrete concavity and the half-plane property},
  journal  = {SIAM J. Discrete Math.},
  fjournal = {SIAM Journal on Discrete Mathematics},
  volume   = {24},
  year     = {2010},
  number   = {3},
  pages    = {921--933},
  issn     = {0895-4801,1095-7146},
  mrclass  = {90C27 (15A42 30C15)},
  mrnumber = {2680224},
  doi      = {10.1137/090758738},
  url      = {https://doi.org/10.1137/090758738},
}

@misc{calvert2025quadraticexchangeequationscoxeter,
  title         = {Quadratic exchange equations for Coxeter matroids},
  author        = {Kieran Calvert and Aram Dermenjian and Alex Fink and Ben Smith},
  year          = {2025},
  eprint        = {2511.13498},
  archiveprefix = {arXiv},
  primaryclass  = {math.CO},
  url           = {https://arxiv.org/abs/2511.13498},
}

@misc{cheung2025valuateddeltamatroidsprincipal,
  title         = {Valuated Delta Matroids and Principal Minors of Hermitian matrices},
  author        = {Nathan Cheung and Tracy Chin and Gaku Liu and Cynthia Vinzant},
  year          = {2025},
  eprint        = {2507.16275},
  archiveprefix = {arXiv},
  primaryclass  = {math.CO},
  url           = {https://arxiv.org/abs/2507.16275},
}

@article{marcus2015interlacingi,
  author     = {Marcus, Adam W. and Spielman, Daniel A. and Srivastava,
                Nikhil},
  title      = {Interlacing families {I}: {B}ipartite {R}amanujan graphs of
                all degrees},
  journal    = {Ann. of Math. (2)},
  fjournal   = {Annals of Mathematics. Second Series},
  volume     = {182},
  year       = {2015},
  number     = {1},
  pages      = {307--325},
  issn       = {0003-486X,1939-8980},
  mrclass    = {05C50 (05C75)},
  mrnumber   = {3374962},
  mrreviewer = {A.\ Vijayakumar},
  doi        = {10.4007/annals.2015.182.1.7},
  url        = {https://doi.org/10.4007/annals.2015.182.1.7},
}

@article{marcus2015interlacingii,
  author     = {Marcus, Adam W. and Spielman, Daniel A. and Srivastava,
                Nikhil},
  title      = {Interlacing families {II}: {M}ixed characteristic polynomials
                and the {K}adison-{S}inger problem},
  journal    = {Ann. of Math. (2)},
  fjournal   = {Annals of Mathematics. Second Series},
  volume     = {182},
  year       = {2015},
  number     = {1},
  pages      = {327--350},
  issn       = {0003-486X,1939-8980},
  mrclass    = {46L05 (42A05 46B03 46L30)},
  mrnumber   = {3374963},
  mrreviewer = {Robert\ S.\ Doran},
  doi        = {10.4007/annals.2015.182.1.8},
  url        = {https://doi.org/10.4007/annals.2015.182.1.8},
}

@article{rincon2012isotropicallinearspaces,
  author     = {Rinc\'on, Felipe},
  title      = {Isotropical linear spaces and valuated {D}elta-matroids},
  journal    = {J. Combin. Theory Ser. A},
  fjournal   = {Journal of Combinatorial Theory. Series A},
  volume     = {119},
  year       = {2012},
  number     = {1},
  pages      = {14--32},
  issn       = {0097-3165,1096-0899},
  mrclass    = {52B40 (05B35 58A17)},
  mrnumber   = {2844079},
  mrreviewer = {Hannah\ Markwig},
  doi        = {10.1016/j.jcta.2011.08.001},
  url        = {https://doi.org/10.1016/j.jcta.2011.08.001},
}

@article{speyer2005horn,
  author     = {Speyer, David E.},
  title      = {Horn's problem, {V}innikov curves, and the hive cone},
  journal    = {Duke Math. J.},
  fjournal   = {Duke Mathematical Journal},
  volume     = {127},
  year       = {2005},
  number     = {3},
  pages      = {395--427},
  issn       = {0012-7094,1547-7398},
  mrclass    = {14P99 (05E15 15A42)},
  mrnumber   = {2132865},
  mrreviewer = {Thomas\ C.\ Craven},
  doi        = {10.1215/S0012-7094-04-12731-0},
  url        = {https://doi.org/10.1215/S0012-7094-04-12731-0},
}
\end{document}